\newtheorem*{thm*}{Theorem}
\newtheorem*{mthm*}{Main Theorem}
\newtheorem{lem}{Lemma}
\newtheorem*{dfn*}{Definition}
\newtheorem*{question*}{Question}
\newtheorem{dfn}{Definition}
\newtheorem{ex}{Example}
\newtheorem*{ex*}{Example}
\newtheorem{rmk}{Remark}
\newcommand{\ctg}{\operatorname{ctg}}
\def\author@andify{%
  \nxandlist {\unskip ,\penalty-1 \space\ignorespaces}%
    {\unskip {} \@@and~}%
    {\unskip \penalty-2 \space \@@and~}%
}
\title[Metrics that are Douglas \& generalized Berwald in dim. two]{Finsler metrics that are both Douglas and generalized Berwald in dimension two}
\author{Nina Bartelme\ss}
\author{Julius Lang}
\address{Affiliation and addresses:}
\address{Nina Bartelme\ss, Julius Lang, Faculty of Mathematics and Informatics, Friedrich-Schiller University Jena, Ernst-Abbe-Platz 2, 07743 Jena, Germany}
\email{nina.bartelmess@uni-jena.de, julius.lang@uni-jena.de}
\begin{document}

	\begin{abstract}
	We proof that in dimension two, a Finsler metric is Douglas and generalized Berwald, if and only if it is Berwald or a Randers metric $\alpha + \beta$, where $\beta$ is closed and is of constant length with respect to $\alpha$.
	\end{abstract}
	
	\maketitle

\section{Introduction}
	Berwald metrics are maybe the most important class of Finsler metrics. They contain Riemannian metrics as a subclass.
	\begin{dfn}	
	A Finsler metric $F$ is called \textit{Berwald}, if one of the following two equivalent properties holds:
	\begin{enumerate}[label=(\alph*)]
	\item Its geodesics coincide with the geodesics of an affine connection.
	\item There exists an affine, torsion free connection, whose parallel transport preserves the metric $F$.
	\end{enumerate}
	\end{dfn}
	
	There are two immediate generalizations of this definition:
	\begin{dfn}
	A Finsler metric $F$ is called
	\begin{enumerate}[label=(\alph*)]
	\item Douglas, if its geodesic coincide with the geodesics of an affine connection ${}^D\nabla$ up to orientation preserving reparametrization.
	\item generalized Berwald, if there exists an affine connection ${}^{gB} \nabla$ (possibly with torsion), whose parallel transport preserves the metric $F$.
	\end{enumerate}
	\end{dfn}
	
	Both generalizations are classical \cite{DefinitionDouglas, DefinitionGB} and have been studied in their own interest (e.g. \cite{DouglasReference1,DouglasReference2,GBReference2,GBReference1}). However the following natural question has never been studied: 
	\begin{question*}
	What can be said of Finsler metrics, that are both Douglas and generalized Berwald?
	\end{question*}	

\begin{figure}[ht]\centering 
		\begin{tikzpicture}
			\draw (0,1.2) circle (0.5) node{\Large\textbf{B}};
			\draw (-1,1) ellipse (3 and 2) node[yshift=12,xshift=-50]{\Large\textbf{D}};
			\draw (1,1) ellipse (3 and 2) node[yshift=12,xshift=50]{\Large\textbf{gB}};
			\node[] at (0,2.3) {\Huge ?};
			\draw (0,0) ellipse (0.85 and 0.4) node{\scriptsize Randers ex.};
		\end{tikzpicture}\caption{Two generalizations of Berwald metrics}
		\label{FigureIntersections}
		\end{figure}
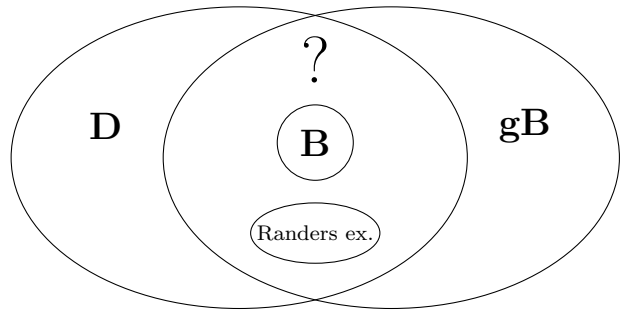	
	
		Clearly, every Berwald metric is Douglas and generalized Berwald.
		There is one more trivial example in the intersection of the two classes:
	
	\begin{ex}
	A Randers metric $F=\alpha + \beta$ is
	\begin{enumerate}[label=(\alph*)]
	\item \label{RandersDouglasCondition} Douglas, if and only if $\beta$ is closed.
	\item \label{RandersGBCondition} generalized Berwald, if and only if the length of $\beta$ with respect to $\alpha$ (in local coordinates $\alpha^{ij}\beta_i \beta_j$) is constant.
	\end{enumerate}
	Fact \ref{RandersDouglasCondition} is almost trivial and first appeared in \cite{BascsoMatsumotoAGeneralizationOfTheNotion}, see also \cite{MatveevOnProjectiveEquivalence}.
	Fact \ref{RandersGBCondition} was proven in \cite[Theorem 2]{CsabaRanders}, but also follows from the following three facts:
	%a combination of the Zermelo navigation \cite{ZermeloNavigation} and the characterization of generalized Berwald metrics by the monochromatic property \cite{BartelmessMatveev}:
	
	\begin{itemize}
	\item Monochromatic characterization of generalized Berwald metrics \cite{BartelmessMatveev}: A Finsler metric is generalized Berwald, if and only if each two tangent spaces $(T_xM, F(x, \cdot))$ are linearly isometric.
	\item Zermelo navigation \cite{ZermeloNavigation}: For any Randers metric $F=\alpha + \beta$, there is a Riemannian metric $h$ and a vector field $W$ on $M$, such that the unit balls of $F$ are the unit balls of $h$, shifted by the vector field $W$. Furthermore, 
	$h(W,W)=h_{ij}W^i W^j=\alpha^{ij}\beta_i \beta_j=|\beta|_\alpha^2$, where $\alpha^{ij}$ is the matrix inverse of $\alpha_{ij}$.
	\item Let $(Q_{ij}), (\tilde Q_{ij})$ be positive definite $(n\times n)$-matrices and $v, \tilde v \in \mathbb R^n$ be vectors. The shifted ellipsoids
	$\{y + v  \mid Q_{ij}y^iy^j=1\}\subseteq \mathbb R^n$ and $\{y + \tilde v  \mid \tilde Q_{ij}y^iy^j=1\} \subseteq \mathbb R^n$
	can be mapped to each other by a linear transformation, if and only if $Q_{ij}v^iv^j = \tilde Q_{ij} \tilde v^i \tilde v^j$.
	\end{itemize}
	\end{ex}

	We expect that, generally, the class of metrics, that are both Douglas and generalized Berwald, contains many interesting examples. In this paper however, we show that in dimension two, this is not the case:
\begin{thm*} In dimension two, any fiber-globally defined Finsler metric, that is both Douglas and generalized Berwald, is Berwald or a Randers metric.
\end{thm*}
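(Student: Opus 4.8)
The plan is to work on the 2-dimensional indicatrix (unit tangent circle) over each point and to translate the two hypotheses into structural facts about how the indicatrix varies with the base point. The monochromatic characterization of generalized Berwald metrics quoted in the excerpt says that all tangent spaces $(T_xM, F(x,\cdot))$ are linearly isometric; equivalently, there is a principal $G$-bundle reduction of the frame bundle with $G\subseteq GL(2,\mathbb R)$ the linear isometry group of one model indicatrix $I\subseteq\mathbb R^2$. The first step is therefore to understand which closed subgroups $G\subseteq GL(2,\mathbb R)$ can arise as $\operatorname{Aut}(I)$ for a strictly convex centrally-symmetric-or-not curve $I$. In dimension two this list is short: up to conjugacy $G$ is either (i) all of a conjugate of $O(2)$ (the Riemannian/Minkowski-ellipse case), (ii) a one-parameter group (when $I$ has a continuous symmetry that is not a full circle — which forces $I$ to be an ellipse again, so this collapses into (i)), (iii) a finite group, or (iv) trivial. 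So the generalized Berwald hypothesis forces the structure group to be \emph{discrete} unless $F$ is Riemannian (hence already Berwald).

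The second step is to feed the Douglas condition into the discrete-structure-group case. A $G$-reduction with $G$ discrete means the indicatrix field is "locally constant up to a locally constant frame": around each point one can choose a frame in which $F(x,\cdot)$ does not depend on $x$ at all. Such an $F$ has, in that frame, vanishing derivatives of the fundamental tensor in the base directions, which is exactly the Berwald condition \emph{provided} the frame can be chosen to vary smoothly — i.e. provided the $G$-structure is, locally, honestly flat. The subtlety is that a discrete $G$-reduction need not be given by smooth sections unless the connection 1-form of the $G$-structure vanishes; in general one only gets that the indicatrix field is constant along the leaves of a distribution. Here is where Douglas enters: the Douglas hypothesis gives a torsion-free affine connection ${}^D\nabla$ whose (unparametrized) geodesics are those of $F$, and the combination "same unparametrized geodesics as a symmetric affine connection" plus "indicatrix field with discrete linear symmetry" should force the Douglas connection itself to be the Berwald-type connection, making $F$ Berwald. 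Concretely I would show that the Douglas–Randers dichotomy for $\beta$-type deformations, together with the rigidity of a discrete structure group, leaves no room for a nontrivial Douglas-but-not-affinely-parametrized behaviour unless the indicatrix is of Randers type, i.e. a circle shifted off-center (whose symmetry group is the stabilizer of the shift direction, a discrete-by-reflection or $1$-dimensional-but-affinely-flat group).

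The third step is to pin down exactly the Randers case. If $G$ is discrete but $F$ is not Berwald, then at each point the indicatrix is a strictly convex curve with only finitely many linear self-symmetries, and the monochromaticity says this one curve $I$ is the same everywhere. I would then argue that the Douglas property — geodesics of a symmetric connection up to reparametrization — is an overdetermined constraint on a non-Berwald monochromatic $F$: writing $F$ via Zermelo data $(h,W)$ as in the excerpt, $|W|_h$ is constant (that is precisely monochromaticity in the Randers subcase), Douglas forces the $\beta$ dual to $W$ to be closed, and closedness of $\beta$ plus constancy of $|\beta|_h$ is the claimed normal form. The remaining task is to rule out non-Randers non-Berwald monochromatic Douglas metrics; this is where the low dimension is essential, since in $\dim 2$ the indicatrix is a curve and "strictly convex, monochromatic, and Douglas" can be analyzed by looking at the single real function describing $I$.

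I expect the \textbf{main obstacle} to be the second step: showing that a discrete (non-full, non-trivial-in-the-Riemannian-sense) structure group is incompatible with the Douglas condition unless we are in the flat/Berwald or the Randers situation. Purely from monochromaticity one gets parallelizability of the indicatrix field along a connection, but one must use the Douglas connection ${}^D\nabla$ to upgrade this to smooth flatness — i.e. to produce a genuine affine parallel transport preserving $F$, which would make $F$ Berwald. The technical heart is a local computation in an adapted frame: expand the Douglas connection's difference from the (canonical, but possibly non-metric) Berwald connection and show that the finite symmetry group of $I$ kills all the "would-be torsion/projective" terms, forcing either that difference to vanish (Berwald) or $I$ to be an off-center ellipse (Randers). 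This is the step whose calculation I would not carry out here but which I expect to occupy the bulk of the paper.
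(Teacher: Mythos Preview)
Your approach is genuinely different from the paper's, and as written it has a real gap exactly where you flag the ``main obstacle''.

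First, a conceptual slip in Step~2. You say a discrete $G$-reduction gives a frame in which $F$ is constant ``provided the frame can be chosen to vary smoothly'', and you identify this with the Berwald condition. But a principal $G$-bundle with $G$ discrete is a covering of $M$, so it \emph{always} has smooth local sections; a smooth adapted frame exists automatically. What Berwald requires is that this frame be \emph{holonomic}, i.e.\ that the connection making the frame parallel be torsion-free. These are different conditions. Concretely: a Randers metric $\alpha+\beta$ with $|\beta|_\alpha$ constant but $d\beta\neq 0$ has symmetry group $\mathbb Z/2$ and an explicit smooth adapted frame ($e_1=\beta^\sharp/|\beta|_\alpha$, $e_2$ an $\alpha$-unit normal), yet it is neither Berwald nor Douglas. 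So the monochromatic/discrete-$G$ picture by itself yields nothing; the entire content of the theorem sits in the step you leave open, namely extracting from the Douglas hypothesis a constraint that forces the model indicatrix to be a shifted ellipse. Your Step~3 does not help here either, since it invokes Zermelo data only after already assuming the metric is Randers.

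The paper takes a completely different, computational route. It combines the two hypotheses into a single linear ODE on each fibre: the generalized Berwald condition eliminates all $x$-derivatives from the spray coefficients, and equating with the Douglas form gives, in polar fibre coordinates $F=r\,f(\theta)$, the equation
\[
(f+f_{\theta\theta})\,P(\theta)=-\sin\theta\,f_\theta+\cos\theta\,f,
\]
where $P$ is a cubic trigonometric polynomial in $\theta$ built from the difference of the two connections and the torsion. The argument then shows (i) if this ODE admits a strictly convex $2\pi$-periodic solution, the coefficients of $P$ are forced into a specific two-parameter family, and (ii) for those coefficients the solution space is spanned by a Riemannian norm and by $\sin\theta$, so every Finsler solution is Randers. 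No structure-group reasoning enters; the Douglas and generalized Berwald conditions are used only through the explicit PDE they jointly impose.
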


\begin{rmk} The proof essentially uses that the Finsler metric is strictly convex and defined fiber-globally, that is on the whole tangent spaces $T_x M$ for $x \in M$. 
\end{rmk}

\textbf{Structure.} We first derive in Section \ref{SectionNecessaryPDE} for all dimensions a linear PDE system (\ref{PDEsystemExtended}), that a Finsler metric must satisfy if it is Douglas and generalized Berwald with respect to connections ${}^D\nabla$ and ${}^{gB}\nabla$.
The system involves only $y$-derivatives and thus lives on a fixed $T_xM$. The coefficients are given in terms of the Christoffel symbols of the two connections.

In Section \ref{SectionDimension2} we consider the system in dimension two, where it is only one equation. Coincidentally, this equation coincides with equation (2.10) from \cite{Samaneh}, where it was obtained as a necessary condition for a Douglas metric to admit a conformally equivalent Douglas metric.
Following the lines from \cite{Samaneh}, we show that, if the system admits a fiber-global, non-Berwald Finsler metric as a solution, the coefficients of the equation must be of a special form. In this case, all solutions can be found explicitly and are Randers metrics. The theorem follows.

{~}

\textbf{Acknowledgement.} The question of studying the intersection of Douglas and generalized Berwald metrics appeared during a DAAD-funded visit at the University of Debrecen. We thank Vladimir Matveev, Csaba Vincze, Tianyu Ma, Samaneh Saberali and Jan Schumm for discussions.
\pagebreak

%\clearpage

\section{A necessary linear PDE system for all dimensions}\label{SectionNecessaryPDE}
\begin{dfn}{~}
\begin{enumerate}[label=(\alph*)]
\item A \textit{strictly convex Finsler metric} is a function $F: TM\backslash 0 \to \mathbb R_{> 0}$, such that
	\begin{itemize}
	\item $F(x,\lambda y) = \lambda F(x,y)$ for all $\lambda > 0$.
	\item $F|_{TM \backslash 0}$ is smooth and the matrix $g_{ij}|_{(x,y)} :=\tfrac12 \frac{\partial^2 F^2}{\partial{y^i} \partial{y^j}} \Big|_{(x,y)}$ is positive definite for all $(x,y) \in TM \backslash 0$.
	\end{itemize}
	
	\item The geodesic spray of $F$ is the vector field $S_F = y^i \partial_{x^i}-2G^i(x,y) \partial_{y^i}$ on $T M \backslash 0$ with
	$$2G^i(x, y) = g^{ij}\Big( E_{x^\ell y^j}y^\ell - E_{x^j} \Big),$$ where $E=\tfrac12 F^2$ and $(g^{ij})$ is the matrix inverse of $(g_{ij})$.

\end{enumerate}
\end{dfn}

	A Finsler metric $F$ is \textit{Douglas} with respect to an affine, torsion-free connection ${}^D \nabla$, if and only if each geodesic of $F$ is after an orientation preserving reparametrization a geodesic of ${}^D\nabla$. Let ${}^D \Gamma^i_{jk}$ be the Christoffel symbols of ${}^D \nabla$ in local coordinates. Then, this is the case, if and only if in all local coordinates there exists a function $\rho: TM \backslash   0\to \mathbb R$, such that
	\begin{equation}\label{SprayEquationDouglas}
	2G^i(x,y) = {}^D\Gamma^i_{jk}(x)y^j y^k + \rho(x,y) y^i.	
	\end{equation}

	Recall that a vector field $X: [a,b] \to TM$ along a curve $c: [a,b] \to M$ is said to be \textit{parallel} with respect to an affine connection $\nabla$, if $\nabla_{\dot c} Y=0$. In local coordinates, if $Y=Y^i \partial_{x^i}$, this condition is given by
	$$\dot X^i(t) + \Gamma_{jk}^i\Big(c(t) \Big) \dot c^j(t) X^k(t)=0 \qquad \forall t \in [a,b],  1 \leq i \leq n.$$
	The rate of change of the Finsler metric along such a parallel vector field is, suppressing the obvious arguments:
	\begin{align}\label{EquationRateOfChange}
	\frac{d}{dt} \Big( F(c(t),X(t))\Big) & = F_{x^j} \dot c^j + F_{y^i} \dot X^i\nonumber\\
	& = F_{x^j}\dot c^j(t) - F_{y^i} \Gamma^i_{jk} \dot c^j X^k\\
	& = \Big[F_{x^j} - F_{y^i} \Gamma^i_{jk}  X^k \Big] \dot c^j.\nonumber
	\end{align}	 

	A Finsler metric is \textit{generalized Berwald} with respect to a connection ${}^{gB}\nabla$ with Christoffel symbols ${}^{gB}\Gamma^i_{jk}$ in local coordinates, if and only if the rate of change (\ref{EquationRateOfChange}) vanishes for all possible curves and all parallel vector fields along these curves. As for any prescribed data $(c(0),\dot c(0), X(0))$, there exist such a curve an a parallel vector field, we conclude that $F$ is generalized Berwald with respect to ${}^{gB}\nabla$, if and only if 
	$$F_{x^j} = F_{y^i} {}^{gB}\Gamma^i_{jk} y^k \qquad \qquad (1 \leq j \leq n)$$
	or equivalently for $E=\tfrac12 F^2$
	\begin{equation}\label{GBCondition}
	E_{x^j} = E_{y^i} {}^{gB}\Gamma^i_{jk} y^k \qquad \qquad (1 \leq j \leq n).
	\end{equation}
	
	Plugging (\ref{GBCondition}) in the definition of the spray coefficients, we can eliminate all $x$-derivatives and obtain:
	\begin{align}\label{SprayCoefficientsGB}
	2G^i  & = g^{ij}  \Big[
	\Big( E_{y^m} {}^{gB}\Gamma^m_{\ell k} y^k \Big)_{y^j}y^\ell
	- E_{y^m} {}^{gB}\Gamma^m_{jk} y^k
	\Big] \nonumber\\
	& = g^{ij}  \Big[
	 g_{jm} {}^{gB}\Gamma^m_{\ell k} y^k y^\ell
	 + E_{y^m}{}^{gB}\Gamma^m_{\ell j} y^\ell
	- E_{y^m} {}^{gB}\Gamma^m_{jk} y^k \Big]\\
	& = {}^{gB}\Gamma^i_{k \ell} y^k y^\ell + g^{ij} E_{y^m}\Big[{}^{gB}\Gamma^m_{\ell j} y^\ell - {}^{gB}\Gamma^m_{j \ell}  \Big]y^\ell. \nonumber
	\end{align}
	
	Combining the above two formulas for the spray coefficients and
	introducing the following notations for the difference of the symmetrized Christoffel symbols of the two connections and the torsion
	$$\Gamma^i_{jk} := \tfrac12 ({}^{gB}\Gamma^i_{jk}+{}^{gB}\Gamma^i_{kj}) - {}^{D}\Gamma^i_{jk}
	\qquad
	T^i_{jk}:={}^{gB}\Gamma^i_{jk}-{}^{gB}\Gamma^i_{kj}	
	$$
	$$\Gamma^i_{j}:=\Gamma^i_{jk}y^k \qquad \Gamma^i := \Gamma^i_{jk}y^jy^k
	\qquad T^i_{j}:=T^i_{jk}y^k,$$
	we obtain:

\begin{lem}\label{LemmaAllDimensionPDEs}
A Finsler metric $F$, that is Douglas with respect to an affine, torsion-free connection ${}^D\nabla$ and generalized Berwald with respect to an affine connection ${}^{gB}\nabla$, satisfies the linear PDE system
	 \begin{equation}\label{PDEsystem}
	  F_{y^k y^i} \Gamma^i -  F_{y^i} T^i_k=0 \qquad\qquad (1 \leq k \leq n).
	 \end{equation}
	The system (\ref{PDEsystem}) is equivalent to the system
	\begin{equation}\label{PDEsystemExtended}
	  F_{y^k y^i} (2\Gamma^i_s+ T^i_s) - F_{y^s y^i} (2 \Gamma^i_k + T^i_k) - 2F_{y^i}T^i_{ks} =0 \qquad (1\leq k < s \leq n).
	 \end{equation}
\end{lem}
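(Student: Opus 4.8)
The plan is to first produce \eqref{PDEsystem} by writing the geodesic spray in two ways and eliminating the unknown reparametrization factor, and then to obtain the equivalence of \eqref{PDEsystem} and \eqref{PDEsystemExtended} from the homogeneity of the left-hand side of \eqref{PDEsystem} alone. For the first part, I would set the Douglas expression \eqref{SprayEquationDouglas} for $2G^i$ equal to the generalized Berwald expression \eqref{SprayCoefficientsGB}. Symmetrizing ${}^{gB}\Gamma^i_{k\ell}y^ky^\ell$ and reading off the definitions of $\Gamma^i_{jk}$ and $T^i_{jk}$, the quadratic terms ${}^D\Gamma^i_{jk}y^jy^k$ cancel and one is left with $\Gamma^i - g^{ij}E_{y^m}T^m_j = \rho\,y^i$, i.e. the vector $\Gamma^i - g^{ij}E_{y^m}T^m_j$ must be a multiple of $y$. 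To identify $\rho$, lower the index with $g_{ki}$, using $g_{ki}y^i=E_{y^k}$ (Euler's identity for the $1$-homogeneous $E_{y^k}$), to get $g_{ki}\Gamma^i - E_{y^m}T^m_k = \rho\,E_{y^k}$; contracting once more with $y^k$ annihilates the torsion term because $T^m_{k\ell}y^ky^\ell=0$ and yields $E_{y^i}\Gamma^i = \rho\,E_{y^k}y^k = \rho F^2$, so $\rho = F^{-1}F_{y^i}\Gamma^i$. Plugging this back in and substituting $g_{ki}=E_{y^ky^i}=F_{y^i}F_{y^k}+F\,F_{y^ky^i}$ and $E_{y^m}=F\,F_{y^m}$, the $(F_{y^i}\Gamma^i)F_{y^k}$ terms cancel; dividing by $F>0$ leaves exactly \eqref{PDEsystem}.

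For the equivalence with \eqref{PDEsystemExtended}, abbreviate $P_k := F_{y^ky^i}\Gamma^i - F_{y^i}T^i_k$, regarded as a function on a fixed tangent space $T_xM$ (the $\Gamma^i_{jk},T^i_{jk}$ depending only on $x$). Differentiating and using $\partial_{y^s}\Gamma^i = 2\Gamma^i_s$ (valid since $\Gamma^i_{jk}$ is symmetric in $j,k$), $\partial_{y^s}T^i_k = T^i_{ks}$, the symmetry of the mixed third $y$-derivatives of $F$, and $T^i_{sk}=-T^i_{ks}$, one checks that the skew combination $\partial_{y^s}P_k - \partial_{y^k}P_s$ is precisely the left-hand side of \eqref{PDEsystemExtended}. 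Thus \eqref{PDEsystem} trivially implies \eqref{PDEsystemExtended}.

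The converse is where a small extra argument is needed, since a ``closedness'' condition $\partial_{y^s}P_k = \partial_{y^k}P_s$ alone does not force $P_k=0$. Here one uses two facts about $P_k$ that hold irrespective of the Douglas/generalized Berwald hypotheses: $P_k$ is positively $1$-homogeneous in $y$, and $y^kP_k\equiv 0$ — indeed $T^i_{k\ell}y^ky^\ell=0$ by antisymmetry and $y^kF_{y^ky^i}=0$ by Euler applied to the $0$-homogeneous $F_{y^i}$. Combining $1$-homogeneity, which gives $y^s\partial_{y^s}P_k=P_k$, with the closedness relation yields $P_k = y^s\partial_{y^s}P_k = y^s\partial_{y^k}P_s = \partial_{y^k}(y^sP_s)-P_k = -P_k$, hence $P_k=0$, i.e. \eqref{PDEsystem}. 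The main obstacle is not any single long computation but spotting this last step: recognizing that $1$-homogeneity together with $y^kP_k=0$ is exactly what upgrades the skew-symmetric system \eqref{PDEsystemExtended} back to the full system \eqref{PDEsystem}. The bookkeeping of symmetrizations and signs in passing from the two spray formulas to \eqref{PDEsystem} is the other place where care, rather than ideas, is required.
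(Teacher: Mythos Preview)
Your argument is correct and follows essentially the same route as the paper: derive \eqref{PDEsystem} by equating \eqref{SprayEquationDouglas} and \eqref{SprayCoefficientsGB}, lowering with $g_{ki}$, eliminating $\rho$ via a $y$-contraction, and then splitting off the $F_{y^k}$-part; for the equivalence, differentiate and antisymmetrize to get \eqref{PDEsystemExtended}, and recover \eqref{PDEsystem} by contracting back with $y^s$. Your converse step --- phrased via $1$-homogeneity of $P_k$ together with $y^kP_k=0$ --- is exactly the paper's ``contract \eqref{PDEsystemExtended} with $y^s$'' spelled out in slightly more detail, so there is no substantive difference.
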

\begin{proof}
Let $F$ be Douglas with respect to a torsion-free affine connection ${}^{D}\nabla$ and Douglas with respect to an affine connection ${}^{gB}\nabla$. Then by subtracting the two formulas (\ref{SprayEquationDouglas}) and (\ref{SprayCoefficientsGB}) for the spray coefficients of $F$ from each other, we get that for some function $\rho: TM \backslash 0 \to M$
\begin{align*}
0 &= \Gamma^i + g^{ij} E_{y^m}T^m_{\ell j} y^\ell - \rho y^i\\
& = \Gamma^i - g^{ij} E_{y^m}T^m_{j}- \rho y^i.
\end{align*}
We have used that the torsion $T^i_{jk}$ is antisymmetric in the lower indices. Contracting the equation by $g_{ki}$, we obtain
\begin{align*}
0 &=  g_{ki}\Gamma^i -  E_{y^m}T^m_{k} - \rho g_{ki}y^i\\
& = (FF_{y^k y^i}+F_{y^k} F_{y^i}) \Gamma^i - F F_{y^m} T^m_k  - \rho F F_{y^k}\\
& =  F(F_{y^k y^i} \Gamma^i -  F_{y^m} T^m_k)  +(F_{y^i} \Gamma^i- \rho F ) F_{y^k} 
\end{align*}
where we have used that by homogeneity $g_{ki}y^i = E_{y^k}$, $E_{y^k}=FF_{y^k}$ and $g_{ki}=FF_{y^k y^i} + F_{y^k}F_{y^i}$. Contracting with $y^k$ and using the homogeneity property $F_{y^k}y^k=F$ and $F_{y^ky^i}y^k=0$, we see that the second summand of the last equation must vanish, that is
$$F_{y^i} \Gamma^i- \rho F  = 0.$$
As $F$ is not vanishing on $TM \backslash 0$, the remaining part gives us the asserted equations
\begin{equation}
F_{y^k y^i} \Gamma^i -  F_{y^i} T^i_k=0. \tag{\ref{PDEsystem}}
\end{equation}
To see that this system implies (\ref{PDEsystemExtended}), differentiate
%(\ref{PDEsystem})
by $y^s$
	$$F_{y^k y^s y^i} \Gamma^i + F_{y^k y^i} 2\Gamma^i_s - F_{y^s y^i}T^i_k - F_{y^i}T^i_{ks}=0$$
 and taking the part antisymmetric in $(k,s)$, we obtain the second asserted system %(\ref{PDEsystemExtended}) 
\begin{equation}
F_{y^k y^i} (2\Gamma^i_s+ T^i_s) - F_{y^s y^i} (2 \Gamma^i_k + T^i_k) - 2F_{y^i}T^i_{ks} =0.
\tag{\ref{PDEsystemExtended}} 
\end{equation}
Conversely, contracting the system (\ref{PDEsystemExtended}) with $y^s$, we obtain back (the double of) the system (\ref{PDEsystem}). Thus the two systems are indeed equivalent.
\end{proof}

\begin{rmk}Note that the PDE system (\ref{PDEsystemExtended}) contains only $y$-derivatives and thus can be seen as a system on a fixed tangent space $T_xM$. It formally consists out of $\tfrac{n(n-1)}{2}$ equations on $F$, supplemented by $n$ homogeneity equations $F_{ij}y^j =0$, so that the total number of equations equals the number $\tfrac{(n+1)n}{2}$ of 2nd order derivatives. Thus, if the coefficient matrix of the second order derivatives is non-degenerate (which is generically the case), the system can be solved for the highest order derivatives and be written into Cauchy-Frobenius form. Then for fixed connection data $(\Gamma, T)$ and a initial data $F(x_0,y_0)$ for some $(x_0,y_0) \in TM$, there is a unique solution $T_{x_0}M$. 
\end{rmk}

\begin{rmk}\label{RemarkSymmetrization}
If a Finsler metric $F$ solves the system (\ref{PDEsystemExtended}), so does its symmetrization $F_s(x,y):=\tfrac12 (F(x,y)+F(x,-y))$.
\end{rmk}

\section{Dimension two and proof of the theorem}\label{SectionDimension2}
	Before proving the theorem on the 2-dimensional case by investigating the system (\ref{PDEsystem}), let us collect some straight forward formulas.  Because of the homogeneity, it will be useful to work in polar coordinates for the fibers with respect to the local coordinates, that is $y^1 = r \cos \theta$ and $y^2 = r \sin \theta$. In this coordinates, any Finsler metric is of the form
	$$F(x_1,x_2,r,\theta) = r f(x_1,x_2, \theta)$$
	 for some smooth function $f$ that is $2\pi$-periodic in its third argument. By the standard relations
	$$\partial_{y^1} = -\tfrac{1}{r} \sin \theta \partial_\theta + \cos \theta\partial_r \qquad \text{and}\qquad \partial_{y^2} = \tfrac{1}{r}\cos \theta \partial_\theta + \sin \theta \partial_r$$
	one calculates that the second order $y$-derivatives of $F$ are given by
	 \begin{equation}\label{EquationRelationTraceAndHessian}
	(F_{y^i y^j}) = \frac{f+f_{\theta \theta}}{r}\begin{pmatrix}
	\sin^2 \theta & - \sin \theta \cos \theta \\ - \sin \theta \cos \theta& \cos^2 \theta
	\end{pmatrix}.
\end{equation}		 
	 
	The next Lemma answers the question, under what condition on $f$ a function of the form $F(x_1,x_2,r, \theta)= r f(x_1,x_2,\theta)$ is actually a strictly convex Finsler metric.
	\begin{lem}\label{LemmaTwoDimensionalWhenStrictlyConvex}
	The function $F(x_1,x_2,r,\theta)= r f(x_1,x_2,\theta)$ is a strictly convex Finsler metric, if and only if $f>0$ and $f+f_{\theta \theta}>0$. 
	\end{lem}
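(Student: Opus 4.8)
The plan is to reduce the statement to a two-by-two linear-algebra fact about the fundamental tensor $g_{ij}$. The conditions ``$F>0$ on $TM\setminus0$'', ``positively $1$-homogeneous'' and ``smooth on $TM\setminus 0$'' are disposed of first and essentially for free: the ansatz $F=rf(x_1,x_2,\theta)$ is automatically positively $1$-homogeneous in $(r,\theta)$; one has $F>0$ on $TM\setminus0$ if and only if $f>0$; and, since $(r,\theta)$ are smooth coordinates on each punctured fibre $T_xM\setminus0$, smoothness of $F$ away from the zero section is equivalent to smoothness (and $2\pi$-periodicity) of $f$. Thus everything reduces to characterizing positive-definiteness of $g_{ij}=\tfrac12(F^2)_{y^iy^j}=FF_{y^iy^j}+F_{y^i}F_{y^j}$ in terms of $f$.

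For this I would use (\ref{EquationRelationTraceAndHessian}) together with the analogous first-order formula, obtained from the same substitution $\partial_{y^1}=-\tfrac1r\sin\theta\,\partial_\theta+\cos\theta\,\partial_r$, $\partial_{y^2}=\tfrac1r\cos\theta\,\partial_\theta+\sin\theta\,\partial_r$:
\begin{equation*}
(F_{y^1},F_{y^2})=f\,(\cos\theta,\sin\theta)+f_\theta\,(-\sin\theta,\cos\theta).
\end{equation*}
The crucial observation is that the Hessian in (\ref{EquationRelationTraceAndHessian}) equals $\tfrac{f+f_{\theta\theta}}{r}\,vv^{\top}$ with $v=(\sin\theta,-\cos\theta)$, hence is of rank at most one with kernel spanned by $y=r(\cos\theta,\sin\theta)$ (as it must be by homogeneity), while $F_{y^i}F_{y^j}$ is positive semidefinite of rank one. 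Contracting with an arbitrary $\xi=(\xi^1,\xi^2)$ then gives
\begin{equation*}
g_{ij}\xi^i\xi^j=F\,\frac{f+f_{\theta\theta}}{r}\,\big(\sin\theta\,\xi^1-\cos\theta\,\xi^2\big)^2+\big(F_{y^i}\xi^i\big)^2 .
\end{equation*}

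From this identity both implications fall out. If $f>0$ and $f+f_{\theta\theta}>0$, both summands are nonnegative and vanish simultaneously only for $\xi$ parallel to $y$ and at the same time orthogonal to $(F_{y^1},F_{y^2})$; but $F_{y^i}y^i=F>0$ by Euler's identity, so these two requirements are transverse and force $\xi=0$, i.e.\ $g_{ij}$ is positive definite. Conversely, if $g_{ij}$ is positive definite, pick $\xi\neq0$ orthogonal to $(F_{y^1},F_{y^2})$; this $\xi$ is not parallel to $y$ (again because $F_{y^i}y^i=F\neq0$), so $\sin\theta\,\xi^1-\cos\theta\,\xi^2\neq0$, the second summand drops out, and the identity gives $0<g_{ij}\xi^i\xi^j=F\,\tfrac{f+f_{\theta\theta}}{r}(\sin\theta\,\xi^1-\cos\theta\,\xi^2)^2$, whence $f+f_{\theta\theta}>0$ (and $f>0$ follows from $F>0$).

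I expect no genuine obstacle here, only two mildly delicate bookkeeping points: (i) recording the first-order $y$-derivatives and recognizing the two rank-one structures, which is routine; and (ii) the transversality remark that the kernel of the Hessian (the line $\mathbb R y$) and the kernel of $F_{y^i}F_{y^j}$ (the line orthogonal to $(F_{y^i})$) never coincide, which is exactly the homogeneity identity $F_{y^i}y^i=F>0$. The smoothness equivalence in the first paragraph should be stated with a word of care but is standard.
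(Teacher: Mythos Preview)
Your argument is correct. The paper takes a slightly different route: it computes $\det(g_{ij})$ explicitly via $g_{ij}=F_{y^i}F_{y^j}+FF_{y^iy^j}$, obtaining $\det(g_{ij})=f^3(f+f_{\theta\theta})$, and then invokes Sylvester's criterion ($g_{11}>0$ and $\det(g_{ij})>0$) to conclude. Your approach instead decomposes the quadratic form $g_{ij}\xi^i\xi^j$ as the sum of the two rank-one pieces $f(f+f_{\theta\theta})(\sin\theta\,\xi^1-\cos\theta\,\xi^2)^2$ and $(F_{y^i}\xi^i)^2$, and argues positive-definiteness directly via the transversality $F_{y^i}y^i=F>0$. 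Both are short and elementary; the paper's determinant computation is more direct and avoids the transversality discussion, while your decomposition makes the geometric structure of $g_{ij}$ (radial plus angular piece) more visible and would generalize more naturally to statements where an explicit determinant is not available.
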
	
	\begin{proof}
	By  chain rule it is $g_{ij}=F_{y^i} F_{y^j} + F F_{y^i y^j}$ and as a consequence the determinant of the fundamental tensor is given by $ \det(g_{ij}) = f^3(f+f_{\theta \theta })$.
	 Indeed, we have
	\begin{align*}		
	\det(g_{ij})= &  \det(F_{y^i} F_{y^j} + F F_{y^i y^j})\\
	 =&\underbrace{\det(F_{y^i} F_{y^j})}_{=0}
	 +\underbrace{F^2\det(F_{y^i y^j})}_{=0}\\
	 &+ F(F_{y^1}^2 F_{y^2y^2}+ F_{y^2}^2 F_{y^1y^1} -2 F_{y^1}F_{y^2} F_{y^1y^2})\\
	=&   f^3(f+f_{\theta \theta}).
	\end{align*}
	The matrix $g_{ij}$ is positive definite, if and only if $g_{11}= F_{y^1}^2 + FF_{y^1 y^1}$ and $\det(g_{ij})$ are positive.
	Thus assuming $f$ is positive and using (\ref{EquationRelationTraceAndHessian}), $F$ is strictly convex if and only if $f+f_{\theta\theta}$ is positive.
	\end{proof}	 
	 
	 Let us now prove the theorem:\vspace{-2mm}
\begin{proof}[Proof of the theorem]
Suppose $F$ is Douglas and generalized Berwald with respect to ${}^D\nabla$ and ${}^{gB}\nabla$. If $F$ is  not Berwald, then ${}^{gB}\nabla$ must have torsion and not all components of $T^i_{jk}$ vanish. By Lemma \ref{LemmaAllDimensionPDEs} $F$ satisfies the system (\ref{PDEsystem}). This system consists out of two equations, which are equivalent, as can be seen by contracting (\ref{PDEsystem}) with $y^k$. Hence, there is only one equation, let us choose the first one and rewrite it in the coordinates $(r,\theta)$:
\begin{align}\label{EquationLongCalculation}
0 = & F_{y^1 y^i} \Gamma^i -  F_{y^i} T^i_1 \nonumber\\
=& F_{y^1y^1} \Gamma^1 +F_{y^1y^2} \Gamma^2  - F_{y^1}T^1_{12}y^2 - F_{y^2} T^2_{12}y^2 \nonumber\\
=& (f+f_{\theta \theta}) \sin^2 \theta \big(\Gamma^1_{11}\cos^2 \theta + 2 \Gamma^1_{12} \cos \theta \sin \theta + \Gamma^1_{22} \sin^2 \theta\big) \nonumber\\
& -(f+f_{\theta \theta}) \cos\theta \sin \theta \big(\Gamma^2_{11}\cos^2 \theta + 2 \Gamma^2_{12} \cos \theta \sin \theta + \Gamma^2_{22} \sin^2 \theta\big)\nonumber\\
& +(\sin \theta f_\theta - \cos \theta f)T^1_{12} \sin \theta -(\cos \theta f_\theta + \sin \theta f)T^2_{12}\sin \theta\nonumber\\
=& \big[ (f+f_{\theta \theta}) \Big( - \Gamma^2_{11}\cos^3 \theta
+(\Gamma^1_{11}-2 \Gamma^2_{12})\cos^2 \theta  \sin \theta \\
&+ (2 \Gamma^1_{12}-\Gamma^2_{22}) \cos \theta \sin^2 \theta + \Gamma^1_{22} \sin^3 \theta\Big) \nonumber\\
& +(\sin \theta f_\theta - \cos \theta f)T^1_{12} 
-(\cos \theta f_\theta + \sin \theta f)T^2_{12} \big]\sin \theta.\nonumber
\end{align}
	Denote the factor of $(f+f_{\theta \theta})$ in the last formula by $P$, that is
	$$P:= K_3 \cos^3 \theta
+K_2\cos^2 \theta  \sin \theta +  K_1 \cos \theta \sin^2 \theta + K_0 \sin^3 \theta$$
	with coefficients \vspace{-1.3mm}
	 $$K_3 = - \Gamma^2_{11} \qquad
	K_2 = \Gamma^1_{11}-2 \Gamma^2_{12} \qquad 
	K_1 = 2 \Gamma^1_{12}-\Gamma^2_{22} \qquad
	K_0 = \Gamma^1_{22}.$$
	As the torsion is antisymmetric in the lower indices, for fixed $x \in M$ it is given by only one vector and we might assume without loss of generality that 
	$(T^1_{12},T^2_{12})=(1,0)$.
	Then the above equation (\ref{EquationLongCalculation}) implies
	\begin{equation}\label{EquationDimension2}
	(f+f_{\theta \theta})  P = - \sin \theta f_\theta + \cos \theta f.
	\end{equation}		 
	We now proof the theorem by showing that
	\begin{enumerate}[label=(\alph*)]
	\item if (\ref{EquationDimension2}) admits a fiber-global, strictly convex Finsler metric as a solution, then the coefficients $K_i$ must be of the form 
	\begin{equation}\label{EquationFormOfKs}
	K_3=\frac{1}{C} \quad
	K_2=-\frac{3A}{C} \quad
	K_1=\frac{3A^2}{C}+1 \quad
	K_0=-\frac{A(A^2+C)}{C}\vspace{-1mm}
	\end{equation}	
	for some constants $A, C \in \mathbb R$ with $C>0$. \label{PartShowCoefficientsForm}
	\item in this case, all solutions must be of Randers type. \label{PartMustBeRanders}
	\end{enumerate}
	\textbf{For \ref{PartShowCoefficientsForm}}, first recall that by Remark \ref{RemarkSymmetrization}, if equation (\ref{EquationDimension2}) admits a fiber-global Finsler metric $F(\theta)=r \cdot f(\theta)$ as a solution, then also its symmetrization $F_s(\theta)=\tfrac r 2 (f(\theta) + f(\theta + \pi))$ is a Finsler metric solution. Thus, we might assume that $F(\theta)=r \cdot f(\theta)$ is already symmetric.
	
	Define $g := \frac{-\sin \theta f_\theta + \cos \theta f}{P}$ and note that by equation (\ref{EquationDimension2}), it is $g= f+f_{\theta\theta}$ and thus $g$ must be defined everywhere and is positive by Lemma \ref{LemmaTwoDimensionalWhenStrictlyConvex}. We calculate that
$$g_\theta
= - \frac{P_\theta + \sin \theta }{P} g$$
and hence 
	\begin{equation}\label{EquationLNg}
	\big( \ln(g) \big)_\theta = \frac{g_\theta}{g}
	= - \frac{P_\theta + \sin \theta }{P}.
	\end{equation}
 	Defining the polynomial $p$ by
	$p(t):= K_3 t^3 + K_2 t^2 + K_1 t + K_0$ and using the cotangent function $\ctg$, we get for $\theta \in (0, \pi)$ that
	\begin{align*}
	P_\theta (\theta) &= \Big( p(\ctg \theta)\cdot \sin^3(\theta) \Big)_\theta\\
	&=p'(\ctg \theta)\ctg'\theta \sin^3 \theta + 3 p(\ctg \theta) \cos \theta \sin^2 \theta.
\end{align*}
	Now using $\ctg' \theta = - \frac{1}{\sin^2 \theta}$, equation (\ref{EquationLNg}) can be rewritten as
	\begin{equation}\label{EquationFormulaForDerivativeOfLNG}
	-\big( \ln g \big)_\theta(\theta)
	= \frac{\big(p'(\ctg \theta) -1 \big)\ctg' \theta}{p(\ctg \theta)} + 3 \ctg \theta.
	\end{equation}		
	Next, we exploit that the right hand side of this equation must be defined everywhere and its integral over $(0, \pi )$ must vanish by the $\pi$-periodicity of $g$.
	As the right hand side of (\ref{EquationFormulaForDerivativeOfLNG}) is not allowed to have a singularity and $\ctg (\theta)$ runs over all reals as $\theta \in (0, \pi)$, any root of $p$, that is $t \in \mathbb R$ with $p(t)=0$, must be a root of $p'-1$.
	%It follows from the mean value theorem and \ref{EquationDimension2}, that $p$ has exactly one real root and can be written as
	Assume\footnote{The case $K_3=0$ is completely analogous, as then $p$ is a polynomial of degree two and $p'-1$ of degree one.} that $K_3 \not=0$, so that $p$ has exactly one real root $A$ and can be written as
	\begin{equation}\label{EquationFormOfPolynomial1}
	p(t)=K_3(t-A)\big( (t-B)^2 +C\big)
	\end{equation}		
	for some $A,B,C \in \mathbb R$ with $C >0$. Then $A$ must be also a root of $p'-1$, so for some $D \in \mathbb R$ we may write
	\begin{equation}\label{EquationFormOfPolynomial2}
	p'(t)-1=3K_3(t-A)(t-D).
	\end{equation}		
	Using the integral vanishing argument from above and that $\ctg$ is odd with respect to $\pi/2$, we have	
	$$0=\int_0^\pi -(\ln g)_\theta d\theta = \int_{-\infty}^\infty \frac{p'(t)-1}{p(t)} dt
	=  \int_{-\infty}^\infty\frac{3(t-D)}{(t-B)^2+C} dt,$$
	which implies that $B=D$. Expanding (\ref{EquationFormOfPolynomial1}) and (\ref{EquationFormOfPolynomial2}) and comparing coefficients, we get the relations
	$$K_2 = -K_3(2B+A), \quad K_1=K_3(B^2 + C+2AB) \quad K_0 = - K_3 A(B^2+C)$$
	$$2K_2 = -3K_3 (A+B) \qquad K_1 -1 = 3 K_3 AD.$$
	Combining the 1st and 4th, it follows that $A=B$. Combining the 2nd and 5th, it follows that $C=\frac{1}{K_3}$. Thus the coefficients $K_i$ are of the form (\ref{EquationFormOfKs}), as claimed.	
	\pagebreak
	
	\textbf{For \ref{PartMustBeRanders}}, let us notice that equation (\ref{EquationDimension2})is linear and can be solved for the 2nd order derivative $f_{\theta \theta}$ near all but finitely many points, and thus the space of $2\pi$-periodic solutions is a subset of a 2-dimensional vector space. The function $f(\theta)=\sin \theta$ is always a (non-Finsler) solution. Thus it is enough to find one more independent solution and any other must be a sum of the two.
	
	Let us determine for which coefficients $K_i$, the Riemannian norm \linebreak $f=\sqrt{g_{11} \cos^2 \theta + 2 g_{12} \cos \theta \sin \theta + g_{22} \sin^2 \theta}$ is a solution to the equation, where $(g_{ij})$ is a positive definite matrix. By direct computations, we obtain that
	$$- \sin \theta f_\theta + \cos \theta f = - F_{y^1}|_{r=1}
	=\frac{1}{f} \big( g_{11} \cos \theta + g_{12} \sin \theta \big)$$
	and $$f+f_{\theta \theta} = \big( \tfrac1{(y^2)^2}F_{y^1y^1}\big)|_{r=1}
	= \frac{g_{11}g_{22} - g_{12}^2}{f^3}.$$
	Plugging these into equation (\ref{EquationDimension2}) and multiplying with $\frac{f^3}{g_{11}g_{22}-g_{12}^2}$ gives
	$$P = \frac{(g_{11}\cos^2 \theta + 2g_{12} \cos \theta \sin \theta + g_{22} \sin^2 \theta)(g_{11} \cos \theta + g_{12} \sin \theta)}{g_{11}g_{22}-g_{12}^2}.$$
	Comparing the coefficients, we obtain that equation (\ref{EquationDimension2}) admits the Riemannian norm given by $g_{ij}$ as a solution, if and only if the coefficients $K_i$ are of the form
	\begin{equation}\label{EquationFormOfKsForRiemannian}{\small
	\begin{matrix}
	K_3= \frac{g_{11}^2}{g_{11}g_{22} - g_{12}^2} &
	K_2 = \frac{3g_{11}g_{12}}{g_{11}g_{22} - g_{12}^2}&
	K_1 = \frac{3g_{12}^2}{g_{11}g_{22} - g_{12}^2}+1 &
	K_0 = \frac{g_{12}g_{22}}{g_{11}g_{22} - g_{12}^2}
	\end{matrix}.}
	\end{equation}
	In this case, the most general solution of the equation is of the form
	$$c_1 \cdot \sqrt{g_{11} \cos^2 \theta + 2 g_{12} \cos \theta \sin \theta + g_{22} \sin^2 \theta} + c_2 \sin \theta, \qquad c_1,c_2 \in \mathbb R.$$
	In particular, all Finsler metrics, that are solutions, are Randers metrics.
	
	It remains to show that, when the equation admits a fiber-global solution and the coefficients $K_i$ are of the form (\ref{EquationFormOfKs}) as in (a), then the equation admits a Riemannian solution. 
	Indeed, for $K_i$ of the form (\ref{EquationFormOfKs}), the Riemannian metric given by the positive definite matrix
	$$g_{ij} = 
	\begin{pmatrix}
	1 & -A \\
	-A & A^2+C
	\end{pmatrix}
	$$
	is a solution to equation (\ref{EquationDimension2}), as can be seen by plugging this values in  (\ref{EquationFormOfKsForRiemannian}).

\end{proof}
	
	\bibliographystyle{plain}
	\bibliography{literature}

\end{document}